\documentclass[12pt]{amsart}
\usepackage{amsfonts,amsmath,amscd,amssymb,paralist,amsthm}
\usepackage{mathrsfs}
\usepackage{amsxtra}
\usepackage[mathscr]{eucal}
\usepackage{graphicx}
\usepackage{latexsym,graphicx,verbatim,nameref}
\usepackage[numbers,sort&compress]{natbib}
\usepackage[all,cmtip]{xy}
\usepackage{hyperref}
\usepackage{fancyhdr}
\usepackage{color}
\usepackage{tikz}
\usepackage{tikz-cd}
\usepackage{caption}
\usepackage{subfigure}
\usetikzlibrary{arrows,scopes,matrix}
\pagestyle{plain}
\numberwithin{equation}{section}

\theoremstyle{definition}

\makeatletter
\newtheorem*{rep@theorem}{\rep@title}
\newcommand{\newreptheorem}[2]{%
\newenvironment{rep#1}[1]{%
 \def\rep@title{#2 \ref{##1}}%
 \begin{rep@theorem}}%
 {\end{rep@theorem}}}
\makeatother

\newreptheorem{theorem}{Theorem}
\newreptheorem{corollary}{Corollary}
\newreptheorem{lemma}{Lemma}

\newtheorem{theorem}{Theorem}[section]
\newtheorem{corollary}[theorem]{Corollary}
\newtheorem{lemma}[theorem]{Lemma}
\newtheorem{proposition}[theorem]{Proposition}
\newtheorem*{theorem*}{Theorem}
\newtheorem*{proposition*}{Proposition}

\newtheorem{definition}[theorem]{Definition}
\newtheorem{remark}[theorem]{Remark}

\newtheorem*{claim*}{Claim}

\newtheorem*{conjecture*}{Conjecture}
\newtheorem*{observation*}{Observation}
\newtheorem*{question*}{Question}

\begin{document}

\title{Commuting circle diffeomorphisms with their derivatives having mixed moduli of continuity}

\date{\today}

\author{Hui Xu \& Enhui Shi}

\address[E. Shi]{School of Mathematical Sciences, Soochow University, Suzhou 215006, P. R. China}
\email{ehshi@suda.edu.cn}

\address[H. Xu]
{\noindent School of mathematical and sciences, Soochow University, Suzhou, 215006, P.R. China}
\email{mathegoer@163.com}

\maketitle

\begin{abstract}
Let $d\geq 2$ be an integer and let $\omega_1,\cdots ,\omega_d$ be moduli of continuity in a specified class which  contains the moduli of H\"{o}lder continuity. Let $f_k$, $k\in\{1,\cdots,d\}$, be $C^{1+\omega_k}$ orientation preserving diffeomorphisms of the circle and $f_1,\cdots, f_d$ commute with each other. We prove that if the rotation numbers of $f_k$'s are independent over the rationals and $\omega_1(t)\cdots\omega_d(t)=t\omega(t)$ with $\lim_{t\rightarrow 0^+}\omega(t)=0$, then $f_1,\cdots,f_d$ are simultaneously (topologically) conjugate to rigid rotations.
\end{abstract}

\section{Introduction}
The classical Poincar\'{e}'s classification theorem points out that every orientation preserving  homeomorphism of the circle with irrational rotation number is topologically conjugate or semiconjugate to a rigid rotation. Furthermore, Denjoy \cite{D1,D2} showed that it must be conjugate by adding differentiability of the homeomorphism. Precisely, for a circle diffeomorphism $f$ with irrational rotation number, if $Df$ has bounded variation, then $f$ is conjugate to a rigid rotation. However, for any $0<\tau<1$, there exists a circle diffeomorphism $f$ such that $Df$ is $\tau$-H\"{o}lder continuous and $f$ has wandering intervals.

The smoothness of the conjugacy between a circle diffeomorphism $f$ and a rigid rotation was intensively studied. If $f$ is analytic,  Arnold \cite{A} showed the analyticity of the conjugacy under the assumption that $f$ is sufficiently close to a rotation and its rotation number satisfies certain Diophantine condition.  Moser \cite{M1} obtained the local result in the smooth case.  Later Herman \cite{H} proved the global version: if $f$ is $k$ times differentiable and its rotation number lies in some set $A$ of full measure, then the conjugacy is $k-1-\varepsilon$ times differentiable for any $\varepsilon>0$, and if $f$ is analytic then the conjugacy is analytic. Yoccoz \cite{Y} showed the global result for all Diophantine numbers and the result was sharpen by Katznelson and Ornstein \cite{KO}.

Moser \cite{M2} considered the smoothness of the simultaneous conjugacy between a finite family of commuting circle diffeomorphisms and rigid rotations and obtained the local results by KAM method.  Fayad and Khanin  \cite{FK} established the global version: if the rotation numbers of several commuting $C^{\infty}$ circle diffeomorphisms satisfy the Diophantine condition, then they are  smoothly conjugate to rigid rotations  simultaneously.

Similar to the classical Denjoy Theorem, Kleptsyn and Navas \cite{KN} determined when several commuting circle diffeomorphisms are simultaneously topologically conjugate to rigid rotations. They showed the following theorem.
\begin{theorem}\label{Navas}
Let $d\geq 2$ be an integer. Let $f_{1},\cdots,f_d$ be commuting circle diffeomorphisms such that $f_k$ is $C^{1+\tau_k}$ for each $k\in\{1,\cdots,d\}$, where $0<\tau_k<1$. If $\tau_1+\cdots+\tau_d>1$ and the rotation numbers of $f_k$'s are independent over the rationals, then they are simultaneously topologically conjugate to rotations.

Conversely, given $\alpha_1,\cdots,\alpha_d\in \mathbb{T}$ which are independent over the rationals and $\tau_1,\cdots,\tau_d\in(0,1)$. If $\tau_1+\cdots+\tau_d<1$, then there exist commuting circle diffeomorphisms $f_1,\cdots,f_d$ such that $f_k$ is $C^{1+\tau_k}$ and the rotation number of it is $\alpha_k, k\in \{1,\cdots,d\}$.
\end{theorem}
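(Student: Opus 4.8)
The plan is to reduce the theorem to a higher-rank version of Denjoy's theorem --- \emph{under the stated hypotheses the $f_k$ have no common wandering interval} --- after which Poincar\'e's classification concludes. For the reduction, rational independence forces each $f_k$ to have irrational rotation number, hence to be uniquely ergodic with a unique minimal set; since the $f_k$ commute, these minimal sets coincide in a common set $K$ and the unique invariant measures coincide in a common atomless probability $\mu$. If $\mathrm{supp}(\mu)=\mathbb{T}$ then $h(x):=\mu([x_0,x])$ is a circle homeomorphism simultaneously conjugating each $f_k$ to $R_{\alpha_k}$, and we are done. Otherwise $K=\mathrm{supp}(\mu)$ is a Cantor set, every component of $\mathbb{T}\setminus K$ is a wandering interval for the group $\Gamma=\langle f_1,\dots,f_d\rangle\cong\mathbb{Z}^d$, and the $\Gamma$-action on $K$ is semiconjugate to the minimal $\mathbb{Z}^d$-rotation action $n\cdot t = t+n_1\alpha_1+\dots+n_d\alpha_d$. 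It then remains to contradict the existence of such a gap $I$.

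The disjointness of the orbit $\{g(I):g\in\Gamma\}$ gives $\sum_{g\in\Gamma}|g(I)|\le 1$, so $|g(I)|\to 0$ as $g\to\infty$ in $\mathbb{Z}^d$. The one-dimensional Denjoy argument would now invoke the return maps $f_k^{q_n}$, but Denjoy's inequality fails for them because $\log Df_k$ need not have bounded variation when $\tau_k<1$. Instead I would use group elements that are \emph{nearly the identity on $K$}: since $\alpha_1,\dots,\alpha_d$ are rationally independent, a Dirichlet/pigeonhole argument yields nonzero integer vectors $m^{(j)}\to\infty$ with $\|m_1^{(j)}\alpha_1+\dots+m_d^{(j)}\alpha_d\|\to 0$, so the elements $g_j=f_1^{m_1^{(j)}}\cdots f_d^{m_d^{(j)}}$ displace every point of $K$ by an amount tending to $0$; passing to bounded powers one extracts $g_m\to\infty$ with $|g_m(I)|\to 0$ and $|g_m^{-1}(I)|\to 0$. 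Fix an interval $M$ containing an endpoint $p\in K$ of $I$ in its interior. The crux is a \emph{uniform} distortion bound $\mathrm{Var}(\log Dg_m^{\pm 1};M)\le C$. Granting it, bounded distortion together with $I\subset M$, $|I|>0$, force $|g_m^{\pm 1}(M)|\to 0$, hence $Dg_m\to 0$ and $Dg_m^{-1}\to 0$ uniformly on $M$; but $g_m$ displaces $p\in\mathrm{int}\,M$ only slightly, so $g_m(p)\in M\cap g_m(M)$, and at $g_m(p)$ one has $Dg_m^{-1}(g_m(p))=(Dg_m(p))^{-1}\to\infty$ --- a contradiction.

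The uniform distortion bound is the main obstacle. Expanding $g_m$ as a word in the $f_k^{\pm1}$ and applying the chain rule and the monotonicity of circle maps, $\mathrm{Var}(\log Dg_m;M)$ is dominated by a sum over the elementary steps of the word of terms $C_k\,\mathrm{Var}(\log Df_k;J_{\mathrm{step}})\le C_k|J_{\mathrm{step}}|^{\tau_k}$, where $J_{\mathrm{step}}$ is the image of $M$ produced by the part of the word already applied. Grouping steps by generator makes the intervals attached to a fixed $f_k$ into pieces of one $f_k$-sub-orbit --- pairwise disjoint, total length $\le 1$ --- yet $\sum_i|J_i|^{\tau_k}$ over such a family may diverge since $\tau_k<1$, which is exactly why the argument for a single $C^{1+\tau}$ diffeomorphism breaks. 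The cure is to use all $d$ generators cooperatively: exploiting commutativity to re-order and re-shape the words so that distinct generators act on the orbit of $I$ at matching locations and scales, one reorganizes the estimate so that what must be summed is, up to constants, $\sum_i\prod_{k=1}^d|J_i^{(k)}|^{\tau_k}$ over families $(J_i^{(k)})_i$ that are disjoint for each fixed $k$. For such families $\sum_i\prod_k|J_i^{(k)}|^{\tau_k}\le 1$ whenever $\tau_1+\dots+\tau_d\ge 1$ (pick $\tau_k'\le\tau_k$ with $\sum_k\tau_k'=1$, use $|J|\le 1$ to replace $\tau_k$ by $\tau_k'$, then apply H\"older with exponents $1/\tau_k'$), and when $\tau_1+\dots+\tau_d>1$ this is moreover $\le(\mathrm{mesh})^{\tau_1+\dots+\tau_d-1}\to 0$; this is precisely where $\tau_1+\dots+\tau_d>1$ is consumed, and in the paper's refinement the condition $\omega_1(t)\cdots\omega_d(t)=t\,\omega(t)$ with $\omega(t)\to 0$ plays the identical role, since $\prod_k\omega_k(|J|)=|J|\,\omega(|J|)$ is summable over disjoint families with sum tending to $0$. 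The delicate point is the combinatorial bookkeeping that realizes the chain-rule sum in this coupled form. For the converse I would build the examples by a simultaneous Denjoy surgery: blow up one $\mathbb{Z}^d$-rotation orbit into intervals $I_m$ of lengths $\ell_m$ with $\sum_m\ell_m<\infty$ and define each $f_k$ to carry $I_m$ onto $I_{m+e_k}$ with derivative smoothly interpolating $\ell_{m+e_k}/\ell_m$; the maps then commute and realize the $\alpha_k$, and a computation shows the $\ell_m$ can be chosen so that every $f_k$ is $C^{1+\tau_k}$ exactly when $\tau_1+\dots+\tau_d<1$.
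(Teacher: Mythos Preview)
Your reduction to the absence of a common wandering interval is correct, and you rightly locate the crux in a uniform distortion bound. The gap is in how you claim to obtain it. The chain-rule expansion of $\log Dg_m$ along any word in the $f_k^{\pm1}$ is a \emph{sum} $\sum_n C_{k_n}|J_n|^{\tau_{k_n}}$, one term per elementary step, each carrying a \emph{single} exponent $\tau_{k_n}$; there is no mechanism that reorganizes it into $\sum_i\prod_{k}|J_i^{(k)}|^{\tau_k}$, and your H\"older estimate bounds only the latter expression. Concretely, for a staircase word to a Dirichlet point $(m_1,\dots,m_d)$ the $f_k$-block alone contributes $\sum_{j<m_k}|J_{k,j}|^{\tau_k}$, which for disjoint intervals of total length $\le 1$ is typically of order $m_k^{1-\tau_k}$ and diverges. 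More fundamentally, by selecting $g_m$ via Dirichlet you give up control of the \emph{path} in $\mathbb{Z}^d$, and it is the path, not its endpoint, that determines the distortion sum.

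The paper (which quotes this theorem from \cite{KN} and proves its generalization, Theorem~\ref{Thm}, by the same method) reverses the order: it first \emph{constructs} a path with bounded distortion and only afterwards locates a close-return element on it. The engine is a Jensen-type averaging (Lemma~\ref{comb. lemma}): in any $m\times n$ array with $\sum l_{i,j}\le1$ some column satisfies $\sum_i\omega(l_{i,j})\le m\,\omega(1/mn)$. Applied inside a nested family of rectangles in $\mathbb{Z}^d$ whose side lengths are tuned so that $X_{k,m}\,\omega_k\bigl(1/\prod_j X_{j,m}\bigr)\ll\omega(1/X_{k,m})$ --- in the H\"older case $X_{k,m}\approx 2^{\tau_k m}$, and this is exactly where $\sum_k\tau_k>1$ (equivalently $\omega(t)=t^{\sum\tau_k-1}\to0$) is consumed --- it produces good lines whose contributions are summable; Lemma~\ref{admissible lines} then stitches these lines into an infinite path. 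The contradiction is also different from your derivative blow-up: bounded distortion along the constructed path together with minimality on the collapsed circle forces some $h_n(I)$ into a small neighbourhood of $I$ without meeting it, and Lemma~\ref{fixed pt} then gives $h_n$ a fixed point, so $\rho(h_n)=0$, contradicting rational independence directly. Your Dirichlet step is thus replaced by finding the close return \emph{along} the already-good path, rather than prescribing it in advance and hoping the distortion cooperates.
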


The main purpose of this paper is to generalize their results by replacing the H\"{o}lder condition by general modulus of continuity. For technical reasons, we introduce the notion of consistency for a family of moduli of continuity in the next section and obtain the following result.
\begin{theorem}\label{Thm}
Let $d\geq 2$ be an integer and let $\omega_1,\cdots,\omega_d$ be concave moduli of continuity with $\omega_1(t)\cdots \omega_d(t)=t\omega(t)$ for some function $\omega(t)$ satisfying that $\lim_{t\rightarrow 0^{+}}\omega(t)=0$. Suppose that $\omega_1,\cdots,\omega_d$ satisfy the consistency condition.
If $f_k, k\in \{1,\cdots, d\}$, are respectively $C^{1+\omega_k}$ commuting circle diffeomorphisms and the rotation numbers of which are independent over the rationals, then they are simultaneously topologically conjugate to rotations.
\end{theorem}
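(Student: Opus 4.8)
The plan is to reduce the statement to the non-existence of wandering intervals for the $\mathbb{Z}^d$-action generated by $f_1,\dots,f_d$, and then to rule those out by a distortion estimate in which commutativity is used to defeat the single-map Denjoy obstruction. For the reduction: each $f_k$ has irrational rotation number, hence (Poincaré) a unique minimal set; commutativity forces these minimal sets to coincide in a single closed invariant set $K$, and a minimal set of a circle homeomorphism with irrational rotation number is either all of $S^1$ or a Cantor set. If $K=S^1$, every $f_k$ is minimal, so $f_1=h^{-1}R_{\rho(f_1)}h$ for some $h\in\mathrm{Homeo}^+(S^1)$, each $hf_kh^{-1}$ commutes with the irrational rotation $R_{\rho(f_1)}$, the centralizer of an irrational rotation consists of rotations, and hence $h$ simultaneously conjugates $f_1,\dots,f_d$ to rotations. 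If instead $K$ is a Cantor set, fix a component $I$ of $S^1\setminus K$; then $g(I)$ is again a component for every $g\in\mathbb{Z}^d$, and if $g(I)=I$ with $g\neq e$ then $g$ has a fixed point, so $f_1^{a_1}\cdots f_d^{a_d}$ has rational rotation number, contradicting rational independence. Thus $I$ is a wandering interval for the group, the intervals $\{g(I):g\in\mathbb{Z}^d\}$ are pairwise disjoint, $\sum_{g\in\mathbb{Z}^d}|g(I)|\le 1$, and $|g(I)|\to 0$ as $g\to\infty$. Moreover $f_1,\dots,f_d$ share a common invariant probability measure $\mu$, supported on $K$, and by unique ergodicity of $f_1|_K$ this $\mu$ is the only $\mathbb{Z}^d$-invariant measure; note $\mu$ is non-atomic. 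It remains to reach a contradiction.

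The core is the distortion estimate. Because $f_k\in C^{1+\omega_k}$, the map $\log Df_k$ has a modulus of continuity comparable to $\omega_k$; hence, along a chain of intervals, the distortion of an iterate $f_k^{N}$ is bounded by a constant times the sum of the values $\omega_k(\ell)$ over the lengths $\ell$ of the links, and — using the combinatorics of circle rotations (dynamical partitions and the convergent denominators $q^{(k)}_n$ of $\rho(f_k)$, which yield chains of multiplicity $\le 2$) together with concavity of $\omega_k$ — by $C\,N\,\omega_k(L/N)$ with $L$ the total length of the chain. For one map this bound is too weak (it equals $\asymp N^{1-\tau}$ in the Hölder case, where Denjoy's theorem genuinely fails), and this is precisely where the other maps must enter. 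Writing a large group element as $f_1^{N_1}\cdots f_d^{N_d}$, its distortion on a suitable interval is at most the sum over $k$ of the contribution of the $f_k$-block; averaging this over a box $B=\prod_k[0,N_k)\subseteq\mathbb{Z}^d$ and using concavity of the $\omega_k$ and the disjointness of $\{g(I)\}$, one is reduced to showing that for block lengths $N_k$ taken along the convergents of $\rho(f_k)$ an expression of the shape $\sum_k N_k\,\omega_k\!\big(1/\prod_j N_j\big)$ can be driven to $0$ while $|B|=\prod_j N_j\to\infty$. This is exactly the point where $\omega_1(t)\cdots\omega_d(t)=t\,\omega(t)$ with $\omega(t)\to0$ is invoked: it is the exact analogue of $\tau_1+\dots+\tau_d>1$ (for which the balance $N_k\asymp|B|^{\tau_k/\sum\tau_j}$ works), and the consistency condition on $(\omega_1,\dots,\omega_d)$ is the technical hypothesis that makes this balancing of scales legitimate once one can no longer manipulate exponents. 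The conclusion of this step is a constant $D$ and arbitrarily large boxes $B$ for which all but a vanishing proportion of $g\in B$ distort a fixed interval $T\supsetneq I$ by at most $D$.

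From this the contradiction follows quickly. Uniform distortion control on $T\supseteq I$ gives $|g(T)|\le D\,\frac{|T|}{|I|}\,|g(I)|$ for the non-exceptional $g\in B$, so these $g$ contribute at most $D\,\frac{|T|}{|I|}\sum_{g}|g(I)|\le D\,\frac{|T|}{|I|}$, while the exceptional $g$ contribute $o(|B|)$ (a vanishing proportion, each with $|g(T)|\le1$); hence $\sum_{g\in B}|g(T)|$ stays bounded. On the other hand $T$ strictly contains $I$, hence meets $K=\mathrm{supp}\,\mu$, so $\mu(T)>0$, and unique ergodicity of the $\mathbb{Z}^d$-action gives $\sum_{g\in B}|g(T)|=\int_{S^1}\#\{g\in B:g^{-1}x\in T\}\,dx\sim|B|\,\mu(T)\to\infty$. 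This contradiction shows that no wandering interval exists, so $K=S^1$ and the first case applies.

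I expect the main obstacle to be the balancing step inside the distortion estimate: converting the product condition and the consistency condition into a genuinely small averaged-distortion bound is delicate, because the block lengths $N_k$ are not free but must be chosen among the (possibly very sparse, Liouville-type) convergent denominators of $\rho(f_k)$; because one must keep track of how the auxiliary interval $T$ fattens under the relevant iterates so that the multiplicity-$\le2$ chains remain available; and because the estimate, which is naturally only an average over the box, must be upgraded to one valid for all but a negligible proportion of the box. By comparison the reduction, the combinatorial chain lemmas, and the concluding ergodic-averaging argument are routine.
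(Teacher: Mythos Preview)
Your reduction to the non-existence of a wandering interval is correct, and your ergodic-averaging endgame would work (it differs from the paper's, which instead produces a fixed point along the path and contradicts rational independence directly). The gap is in the distortion estimate, and it is not where you think it is.

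First, the convergent denominators are a red herring. Because $I$ is wandering for the whole $\mathbb{Z}^d$-action, the family $\{g(I)\}$ is already pairwise disjoint; you never need Denjoy--Koksma multiplicity control, and nothing forces the $N_k$ to lie along the continued-fraction denominators of $\rho(f_k)$. The control on the fattened interval $T$ then follows from the control on $I$ by the bootstrap in the paper's Lemma~\ref{fixed pt}.

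The real problem is your block decomposition $g=f_1^{a_1}\cdots f_d^{a_d}$. Along the corresponding staircase path the first block contributes $\sum_{j<a_1}\omega_1(|f_1^j(I)|)$, and concavity together with disjointness of only $a_1$ intervals gives at best $a_1\,\omega_1(1/a_1)$, which diverges for any genuine modulus (e.g.\ $\omega_1(t)=t^{1/2}$ gives $\sqrt{a_1}$). Averaging over the box does not help: this block is independent of $a_2,\dots,a_d$, so the other coordinates never enter its estimate. In short, your claimed average bound $\sum_k N_k\,\omega_k\!\big(1/\prod_j N_j\big)$ is not what the block path produces; you only get $\sum_k N_k\,\omega_k\!\big(1/(N_1\cdots N_k)\big)$, whose $k=1$ term already blows up. No permutation of the blocks fixes this, since some block must come first.

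The paper's remedy is precisely to abandon straight blocks in favour of a \emph{zig-zag} path through $\mathbb{N}^d$: one nests rectangles $R_m$ which grow in one coordinate direction at a time, and inside each $R_m$ a pigeonhole--concavity lemma (Lemma~\ref{comb. lemma}) selects a good slice along which the relevant $\omega_k$-sum is at most $X_{k,m}\,\omega_k\!\big(1/\prod_j X_{j,m}\big)$. Because every coordinate has already been visited before, the full product $\prod_j X_{j,m}$ appears in the argument --- this is exactly what the block approach cannot achieve. The consistency condition then makes $\sum_m X_{r(m),m}\,\omega_{r(m)}\!\big(1/\prod_j X_{j,m}\big)\ll\sum_m\omega(1/X_{r(m),m})$ summable. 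One obtains a single unbounded path with uniformly bounded distortion; either your averaging endgame or the paper's fixed-point endgame then finishes the proof.
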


Here we should remark that the proof idea of the above theorem comes from \cite{KN}. The consistency condition can be  implied by some more simpler conditions,
and we get the following corollary.

\begin{corollary}\label{consistency}
Let $d\geq 2$ be an integer and $\omega_1,\cdots,\omega_d$ be concave moduli of continuity satisfying that $\omega_1(t)\cdots \omega_d(t)=t\omega(t)$ with
\begin{equation}\label{consistency eq0}
\lim_{t\rightarrow 0^{+}}\omega(t)=0~\text{and}~\omega(t_1t_2)\ll \omega(t_1)\omega(t_2), \forall t_1,t_2\in [0,1].
\end{equation}
Suppose that $\omega_1,\cdots,\omega_d$ and the moduli of H\"{o}lder continuity compose a comparable family of moduli of continuity. If $f_k, k\in \{1,\cdots, d\}$, are respectively $C^{1+\omega_k}$ circle diffeomorphisms which do commute and the rotation numbers of which are independent over the rationals,  then they are simultaneously topologically conjugate to rotations.
\end{corollary}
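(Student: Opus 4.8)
The plan is to obtain Corollary~\ref{consistency} as a direct consequence of Theorem~\ref{Thm}: all the dynamical content is already contained in that theorem, so what remains is to check that the hypotheses listed in the corollary---the product identity $\omega_1(t)\cdots\omega_d(t)=t\omega(t)$ together with the submultiplicativity estimate in (\ref{consistency eq0}), and the assumption that $\omega_1,\dots,\omega_d$ and the H\"older moduli form a comparable family---imply the consistency condition required by Theorem~\ref{Thm}. Once that implication is established, one simply invokes Theorem~\ref{Thm} and is done.

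First I would unwind the definition of consistency and isolate the two ingredients it asks for: on the one hand, a summability/decay estimate for each individual modulus $\omega_k$ along the lengths that appear in the dynamical (Denjoy-type) partitions, and on the other hand a compatibility between the $\omega_k$'s that lets these individual estimates be combined, via the identity $\prod_k\omega_k(t)=t\omega(t)$, into a single bound governed by the function $\omega$. For the first ingredient, the key point is that comparability of the family with the H\"older moduli forces each $\omega_k$ to be squeezed between two H\"older moduli, $t^{\sigma}\ll\omega_k(t)\ll t^{\tau}$ for suitable $0<\sigma\le\tau<1$; combined with the geometric-type decay $\ell_n\le C\lambda^{n}$ of the relevant interval lengths, this yields convergent sums $\sum_n\omega_k(\ell_n)<\infty$. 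For the second ingredient, comparability also makes the family $\{\omega_1,\dots,\omega_d\}$ totally preordered by $\ll$, so after relabeling we may assume $\omega_1\ll\cdots\ll\omega_d$, which removes all ambiguity from the pairwise comparisons that enter the consistency condition; and the submultiplicativity $\omega(t_1t_2)\ll\omega(t_1)\omega(t_2)$ together with $\lim_{t\to0^+}\omega(t)=0$ (so that $\omega(\lambda)<1$ after replacing $\lambda$ by a small power) is exactly what is needed to push these estimates through the iteration, since $\omega(\lambda^{n})\ll\omega(\lambda)^{n}$ decays geometrically while only a bounded constant is picked up per stage. Assembling these facts verifies consistency.

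I expect the main obstacle to be bookkeeping rather than ideas: one must match the abstract consistency condition term by term with the concrete sums and products produced above, and in particular keep the multiplicative constants hidden in $\ll$ from accumulating over the $n\to\infty$ iteration and over the $d$ different maps. The submultiplicativity hypothesis in (\ref{consistency eq0}) is precisely the device that prevents this accumulation, so the estimates should close; the care required is in the uniformity of the constants across the maps $f_1,\dots,f_d$ and across the successive stages of the dynamical partitions. Having checked consistency, the conclusion of the corollary is literally the statement of Theorem~\ref{Thm}.
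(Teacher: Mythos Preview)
Your high-level strategy is correct: the entire task is to show that the hypotheses of the corollary imply the consistency condition, after which Theorem~\ref{Thm} applies verbatim. But you have misread what consistency actually demands. The condition is a purely combinatorial property of the moduli $\omega_1,\dots,\omega_d$, with no reference whatsoever to the dynamics: one must exhibit strictly increasing integer sequences $(X_{k,m})_{m\ge1}$ such that, for every $k$,
\[
\omega_k\Bigl(\frac{1}{\prod_{j=1}^d X_{j,m}}\Bigr)\ \ll\ \prod_{j=1}^d \omega_j\Bigl(\frac{1}{X_{k,m}}\Bigr).
\]
There are no dynamical interval lengths $\ell_n$, no Denjoy partitions, and no sums $\sum_n\omega_k(\ell_n)$ to control here; those objects live inside the proof of Theorem~\ref{Thm}, which you are invoking as a black box. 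Consequently your two ``ingredients'' (summability along dynamical lengths, and geometric decay of $\omega(\lambda^n)$) do not address the definition at all, and the plan as written does not verify consistency.

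What must actually be done---and this is the substance of the proof, not bookkeeping---is to construct, for arbitrarily small $x_1>0$, positive reals $x_2,\dots,x_d$ tending to $0$ with $x_1$ and satisfying simultaneously
\[
\omega_k(x_1\cdots x_d)\ \le\ C\,\omega_1(x_k)\cdots\omega_d(x_k)\qquad (k=1,\dots,d),
\]
with $C$ independent of the $x_k$; one then takes $X_{k,m}=[1/x_k]$ along a sequence. The paper does this by an explicit inductive construction: after ordering the $\omega_k$ via comparability and invoking Theorem~\ref{Navas} to assume lower H\"older bounds $\omega_k(x)\ge x^{\tau_k}$ with $\tau_1+\cdots+\tau_d\le1$, it fixes $x_1$ and solves the first $d-1$ relations above one at a time for $x_2,\dots,x_{d-1}$ (each $\omega_k$ is strictly increasing and continuous, so this is the intermediate value theorem), using the H\"older exponents $\tau_k$ to check that each successive equation is actually solvable in the required range. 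The final relation, for $k=d$, then follows from the product identity $\prod_j\omega_j(t)=t\omega(t)$ together with the submultiplicativity $\omega(t_1t_2)\ll\omega(t_1)\omega(t_2)$ from (\ref{consistency eq0}); this is the one place that hypothesis enters. Your proposal contains neither this construction nor any substitute for it.
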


The paper is organized as follows. In section 2, we show some definitions and lemmas. In section 3, we prove the main theorem. In the
last section, we proved Corollary \ref{consistency}.
\section{Some Definitions and Lemmas}
For a continuous function $f$ of the circle, the\emph{ modulus of continuity} of $f$ is defined by
\begin{displaymath}
\omega_f(t)=\sup_{|x-y|\leq t} |f(x)-f(y)|.
\end{displaymath}
Then $\omega_f(t)$ is a continuous function on $[0,+\infty)$ and has the following properties:
\begin{itemize}
  \item [(1)] $\omega_f$ is monotonic nondecreasing;
  \item [(2)] $\omega_f(0)=0$ and $\omega_f(t)=1$ for any $t\geq 1$;
  \item [(3)] $\omega_f(t_1+t_2)\leq \omega_f(t_1)+\omega_f(t_2)$;
  \item [(4)] $\omega_f(k t)\leq k\omega_f(t)$ for $k\in\mathbb{N}$ and $t>0$;
  \item [(5)] $\omega_f(r t)\leq ([r]+1)\omega_f(t)$ for $r\geq 0$.
\end{itemize}

\indent Note that if $\omega_f(t)=O(t^{\alpha})$, then $f$ is called {\it $\alpha$-{\rm H\"{o}lder} continuous}. In order to study the modulus of continuity of a function $f$, we often choose some standard moduli to measure the continuity of $f$. The modulus of H\"{o}lder continuity are often chosen.

\begin{definition}
A \textit{modulus of continuity} is a continuous function $w:[0,1]\rightarrow [0,+\infty)$ which is  strictly increasing  and $w(0)=0$. We say a continuous function $f$ of the circle is {\it $\omega$-continuous} if
\begin{displaymath}
\sup_{x\neq y}\frac{|f(x)-f(y)|}{\omega(|x-y|)}<+\infty.
\end{displaymath}
In this case, we say that $\sup_{x\neq y}\frac{|f(x)-f(y)|}{\omega(|x-y|)}$ is the {\it $\omega$-constant} of $f$.
We say $f$ is of {\it class $C^{1+\omega}$} if $f$ is $C^1$ and the derivative of $f$ is $\omega$-continuous.
\end{definition}

In order to compare the moduli of continuity of distinct functions, we often choose some comparable moduli of continuity.
\begin{definition}
Let $\omega_1$ and $\omega_2$ be moduli of continuity. We say that {\it $\omega_1$ is stronger than $\omega_2$}, if there exists $\delta>0$ such that $\omega_1(x)\leq \omega_2(x),\forall x\in(0,\delta)$. In this case, we call they are \textit{comparable}. We say a family $\Omega$ of moduli of continuity is comparable, if there exists $\delta>$ such that for any $\omega,\omega'\in\Omega$, $\omega(x)\geq\omega'(x),\forall x\in(0,\delta)$ or $\omega(x)\leq\omega'(x),\forall x\in(0,\delta)$.
\end{definition}
Let $(x_n)_{n\geq 1}$ and $(y_n)_{n\geq 1}$ be two sequences of positive real numbers. We write $x_n\ll y_n$ if there exists a constant $C>0$ such that
\begin{equation*}
x_n\leq C y_n, ~\forall n\geq 1.
\end{equation*}

\begin{definition}
Let $\omega_1,\cdots,\omega_n$ be moduli of continuity. We call them  \textit{consistent} if there exist strictly increasing sequences $(X_{1,m})_{m\geq 1},\cdots$, $(X_{n,m})_{m\geq 1}$  of positive integers such that for each $k\in\{1,\cdots,n\}$,
\begin{displaymath}
\omega_k\left(\frac{1}{\prod_{j=1}^n X_{j,m}} \right)\ll \prod_{j=1}^n \omega_j\left( \frac{1}{X_{k,m}}\right).
\end{displaymath}
\end{definition}

\begin{remark}
The consistency is a technical condition. It is easy to check that the usual moduli of continuity satisfy the consistency. Taking $\omega_1(x)=x^{\alpha_1},\cdots,\omega_d(x)=x^{\alpha_d}$ as example, we can choose
\begin{equation*}
X_{1,m}=\left[ 2^{\alpha_{1}m}\right],\cdots,X_{1,m}=\left[ 2^{\alpha_{d}m}\right],
\end{equation*}
for $m$ large enough.
\end{remark}

\begin{proposition}
Let $d\geq 2$ be an integer and $\alpha_1,\cdots,\alpha_d\in(0,1), \varepsilon_1,\cdots$, $\varepsilon_d\geq0$. The moduli of continuity $\omega_1(x)=x^{\alpha_1}\left(\log\frac{1}{x}\right)^{\varepsilon_1},\cdots, \omega_d(x)=x^{\alpha_d}\left(\log\frac{1}{x}\right)^{\varepsilon_d}$ for $x$ small are consistent.
\end{proposition}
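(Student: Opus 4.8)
The plan is to produce explicit increasing integer sequences $(X_{k,m})_{m\geq 1}$ witnessing consistency, modeled on the Hölder case from the Remark but adjusted for the logarithmic factors. The natural first guess is $X_{k,m} = \lfloor 2^{\alpha_k m}\rfloor$ (or with a sharper exponent to balance the logs); with this choice $\frac{1}{X_{k,m}} \asymp 2^{-\alpha_k m}$ and $\log\frac{1}{X_{k,m}} \asymp m$, so that $\omega_j\!\left(\frac{1}{X_{k,m}}\right) \asymp 2^{-\alpha_j\alpha_k m} m^{\varepsilon_j}$ and hence $\prod_{j=1}^d \omega_j\!\left(\frac{1}{X_{k,m}}\right) \asymp 2^{-(\sum_j \alpha_j)\alpha_k m}\, m^{\sum_j \varepsilon_j}$. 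On the other side, $\prod_{j=1}^d X_{j,m} \asymp 2^{(\sum_j \alpha_j) m}$, so $\omega_k\!\left(\frac{1}{\prod_j X_{j,m}}\right) \asymp 2^{-\alpha_k (\sum_j \alpha_j) m}\, m^{\varepsilon_k}$. The leading exponential terms match exactly, and since $\varepsilon_k \leq \sum_{j=1}^d \varepsilon_j$, the polynomial-in-$m$ factor on the left is dominated by that on the right. Thus $\omega_k\!\left(\frac{1}{\prod_j X_{j,m}}\right) \ll \prod_{j=1}^d \omega_j\!\left(\frac{1}{X_{k,m}}\right)$ for $m$ large, which is exactly the consistency inequality.

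First I would record the elementary estimates that make the $\asymp$ signs above rigorous: for $a>0$ and $m$ large, $\frac{1}{2} 2^{am} \leq \lfloor 2^{am}\rfloor \leq 2^{am}$, so passing to floors changes everything only by bounded multiplicative constants; and $\log\frac{1}{\lfloor 2^{am}\rfloor^{-1}} = \log\lfloor 2^{am}\rfloor = am\log 2 + O(1)$, which is $\asymp m$. I would also note that each $\omega_k$ is monotone increasing and continuous on a neighborhood of $0$, so replacing its argument by a quantity comparable to it (up to a bounded factor, and small enough to lie in the domain where the given formula for $\omega_k$ is valid) changes the value by at most a bounded factor — here one uses that $x^{\alpha}(\log\frac1x)^{\varepsilon}$ is, for small $x$, a "nice" function in the sense that $\omega_k(cx) \ll \omega_k(x)$ for any fixed constant $c>0$ (this is the quasi-multiplicativity/doubling-type property of such standard moduli, immediate from $\log\frac{1}{cx} = \log\frac1x - \log c \asymp \log\frac1x$ for small $x$).

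With those preliminaries the verification is a one-line exponent comparison as sketched above, and strict monotonicity of $(X_{k,m})_m$ is clear since $\alpha_k > 0$ (taking $m$ along a subsequence if one wants strict increase of the integer parts, or simply noting $\lfloor 2^{\alpha_k(m+1)}\rfloor > \lfloor 2^{\alpha_k m}\rfloor$ once $2^{\alpha_k m}(2^{\alpha_k}-1) > 1$). I do not expect a genuine obstacle here: the statement is essentially the observation that the logarithmic correction terms $(\log\frac1x)^{\varepsilon_k}$ contribute only polynomially in $m$ while the dominant balance is governed by the powers $x^{\alpha_k}$, exactly as in the pure Hölder case, and the inequality $\varepsilon_k \le \sum_j \varepsilon_j$ (valid because all $\varepsilon_j \ge 0$) is precisely what is needed to absorb the left-hand polynomial factor into the right-hand one. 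The only mild care required is bookkeeping: making sure all arguments of the $\omega_k$'s are small enough to be in the region where the displayed formulas hold, and tracking the finitely many bounded constants so that the final relation is genuinely of the form $\leq C\,(\cdot)$ for all large $m$, which then extends to all $m\ge 1$ by enlarging $C$.
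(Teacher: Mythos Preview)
Your proposal is correct and follows essentially the same approach as the paper: choose $X_{k,m}=\lfloor a^{\alpha_k m}\rfloor$ (the paper uses a general base $a>1$, you take $a=2$), compute both sides, observe that the exponential factors $a^{-(\sum_j\alpha_j)\alpha_k m}$ match exactly while the logarithmic factors contribute $m^{\varepsilon_k}$ on the left versus $m^{\sum_j\varepsilon_j}$ on the right, and conclude using $\varepsilon_k\le\sum_j\varepsilon_j$. The paper is slightly terser---it computes with the real numbers $x_{j,m}=a^{-\alpha_j m}$ and then passes to integer parts ``by continuity''---whereas you spell out the doubling/comparability estimates justifying that passage, but the argument is the same.
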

\begin{proof}
Let $a>1$ and $x_{j,m}=\frac{1}{a^{\alpha_jm}}, j\in\{1,\cdots,d\}$. Then for any $j\in\{1,\cdots,d\}$, we have
\begin{eqnarray*}
\omega_j(x_{1,m}\cdots x_{d,m})&=&\frac{1}{a^{(\alpha_1+\cdots+\alpha_d)\alpha_j m}}\left(\log a^{(\alpha_1+\cdots\alpha_d)m}\right)^{\varepsilon_j}\\
&=&\frac{1}{a^{(\alpha_1+\cdots+\alpha_d)\alpha_j m}}(\alpha_1+\cdots+\alpha_d)^{\varepsilon_j}\left(m\log a\right)^{\varepsilon_j},
\end{eqnarray*}
and
\begin{eqnarray*}
\omega_1(x_{j,m})\cdots \omega_d(x_{j,m})&=&\frac{1}{a^{\alpha_1\alpha_j m}}\left(\log a^{\alpha_jm}\right)^{\varepsilon_1}\cdots \frac{1}{a^{\alpha_d\alpha_j m}}\left(\log a^{\alpha_jm}\right)^{\varepsilon_d}\\
&=&\frac{1}{a^{(\alpha_1+\cdots+\alpha_d)\alpha_j m}}(\alpha_1)^{\varepsilon_1}\cdots(\alpha_d)^{\varepsilon_d}\left(m\log a\right)^{\varepsilon_1+\cdots \varepsilon_d}.
\end{eqnarray*}
It is easy to see that
\begin{equation*}
\omega_j(x_{1,m}\cdots x_{d,m})\ll \omega_1(x_{j,m})\cdots \omega_d(x_{j,m}),~\forall m\in\mathbb{N}.
\end{equation*}
Set $X_{j,m}=[a^{\alpha_jm}]$ for $m$ large enough. Then, by the continuity, we also have
\begin{equation*}
\omega_j\left(\frac{1}{X_{1,m}\cdots X_{d,m}}\right)\ll \omega_1\left(\frac{1}{X_{j,m}}\right)\cdots \omega_d\left(\frac{1}{X_{j,m}}\right),~ m\in\mathbb{N}.
\end{equation*}
This shows that $\omega_1,\cdots,\omega_d$ are consistent.
\end{proof}

\begin{lemma}\label{modulus of continuity for compsition}
Let $\varphi$ and $\psi$ be continuous functions of the circle. If $\varphi$ is Lipschitz-continuous and $\psi $ is $\omega$-continuous, then $\varphi\circ\psi$ is also $\omega$-continuous.
\end{lemma}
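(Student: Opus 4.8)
The plan is to reduce the statement to a single chain of inequalities built from the two hypotheses. Write $L$ for the Lipschitz constant of $\varphi$, so that $|\varphi(u)-\varphi(v)|\leq L|u-v|$ for all points $u,v$ on the circle, and write $C$ for the $\omega$-constant of $\psi$, so that $|\psi(x)-\psi(y)|\leq C\,\omega(|x-y|)$ for all $x\neq y$; both quantities are finite by hypothesis.

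First I would fix $x\neq y$ on the circle and apply the Lipschitz bound for $\varphi$ with $u=\psi(x)$ and $v=\psi(y)$, obtaining $|\varphi(\psi(x))-\varphi(\psi(y))|\leq L\,|\psi(x)-\psi(y)|$. Then I would insert the $\omega$-continuity estimate for $\psi$ on the right-hand side to get $|\varphi(\psi(x))-\varphi(\psi(y))|\leq LC\,\omega(|x-y|)$. Dividing by $\omega(|x-y|)$ and taking the supremum over all $x\neq y$ yields $\sup_{x\neq y}\frac{|\varphi\circ\psi(x)-\varphi\circ\psi(y)|}{\omega(|x-y|)}\leq LC<+\infty$, which is precisely the assertion that $\varphi\circ\psi$ is $\omega$-continuous, with $\omega$-constant at most the product of the Lipschitz constant of $\varphi$ and the $\omega$-constant of $\psi$.

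The only point deserving a word of care is bookkeeping about the metric: the distances $|{\cdot}-{\cdot}|$ occurring in the argument are the arc-length distances on the circle, and one should note that the argument $|x-y|$ always lies in the domain $[0,1]$ of $\omega$, which is automatic since the circle has bounded diameter. I do not anticipate any real obstacle here: the lemma is a direct composition estimate that uses no iteration, no regularity of $\omega$ beyond its being a modulus of continuity, and nothing about the circle structure other than that $\varphi$ and $\psi$ are defined on it; its role in the paper is simply to let us pass modulus-of-continuity information through postcomposition with Lipschitz maps (in particular with $C^1$ maps), which is needed later when estimating derivatives of conjugating maps.
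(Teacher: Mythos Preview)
Your proof is correct and follows essentially the same approach as the paper: both bound $|\varphi\circ\psi(x)-\varphi\circ\psi(y)|$ by the product of the Lipschitz ratio for $\varphi$ and the $\omega$-ratio for $\psi$, then take the supremum. If anything, your version is slightly cleaner, since the paper writes the bound as a product of quotients and thus implicitly divides by $|\psi(x)-\psi(y)|$, which could vanish; your chain of inequalities sidesteps that nuisance.
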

\begin{proof} The $\omega$-continuity of $\varphi\circ\psi$ can be seen as follows:
\begin{eqnarray*}
&&\sup_{x\neq y}\frac{|\varphi\circ\psi(x)-\varphi\circ\psi(y)|}{\omega(|x-y|)}\\
&=&\sup_{x\neq y}\frac{|\varphi\circ\psi(x)-\varphi\circ\psi(y)|}{|\psi(x)-\psi(y)|}\frac{|\psi(x)-\psi(y)|}{\omega(|x-y|)}\\
&\leq&\sup_{x\neq y}\frac{|\varphi\circ\psi(x)-\varphi\circ\psi(y)|}{|\psi(x)-\psi(y)|}\sup_{x\neq y} \frac{|\psi(x)-\psi(y)|}{\omega(|x-y|)} <+\infty.
\end{eqnarray*}
\end{proof}

\begin{remark}
If $g$ is an orientation preserving circle diffeomorphism, then there exist constants $C_1,C_2>0$ such that $C_1\leq Dg_k(x)\leq C_2, \forall x\in \mathbb{T}^1$. Note that the function $\log(x), x\in [C_1,C_2]$ is Lipschitz. Then, by Lemma \ref{modulus of continuity for compsition}, we know that $\log Dg$ has the same modulus of continuity with $Dg$.
\end{remark}

\begin{lemma}\label{fixed pt}
Let $\omega_1,\cdots, \omega_l$ be concave moduli of continuity and $g_k,k\in\{1,\cdots, l\}$ be orientation preserving circle diffeomorphisms which are respectively of class $C^{1+\omega_k}$. Let $C_k$ denote the $\omega_k$-constant of $\log Dg_k$ and $C=\max\{ C_1,\cdots, C_l\}$.  Given $n_0\in \mathbb{N}$, for each $n\leq n_0$, let us choose $k_n\in\{1,\cdots l\}$ and  for a fixed interval $I$, let us choose a constant $S>0$ such that
\begin{displaymath}
\sum_{n=0}^{n_0-1} \omega_{k_{n+1}}(g_{k_n}\cdots g_{k_1}(I)) \leq S.
\end{displaymath}
If there exists $n\leq n_0$ such that $g_{k_n}\cdots g_{k_1}(I)$ is contained in the $L$-neighborhood of $I$ but does not intersect $I$, then $g_{k_n}\cdots g_{k_1}$ has a fixed point, where $L:= \frac{|I|}{2\exp(2CS)}$.
\end{lemma}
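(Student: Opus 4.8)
The plan is to combine a one–sided distortion estimate for the composition on $I$ with the intermediate value theorem, after a preliminary reduction. Throughout write $h_j:=g_{k_j}\circ\cdots\circ g_{k_1}$ (with $h_0=\mathrm{id}$), $h:=h_n$ for the given composition, and $I_j:=h_j(I)$. The first ingredient is the usual distortion bound: for any interval $J$, any $m\le n$ and any $x,y\in J$,
\[
\bigl|\log Dh_m(x)-\log Dh_m(y)\bigr|
=\Bigl|\sum_{j=0}^{m-1}\bigl(\log Dg_{k_{j+1}}(h_jx)-\log Dg_{k_{j+1}}(h_jy)\bigr)\Bigr|
\le C\sum_{j=0}^{m-1}\omega_{k_{j+1}}\bigl(|h_j(J)|\bigr),
\]
because $h_jx,h_jy\in h_j(J)$, $\omega_{k_{j+1}}$ is nondecreasing, and each $\log Dg_{k_{j+1}}$ is $\omega_{k_{j+1}}$-continuous with constant at most $C$ (the remark after Lemma~\ref{modulus of continuity for compsition}). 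With $J=I$, $m=n$ the hypothesis $\sum_{j=0}^{n-1}\omega_{k_{j+1}}(|I_j|)\le S$ gives bounded distortion of $h$ on $I$ with constant $e^{CS}$.

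Next I would reduce to the case where $h(I)$ lies to the right of $I$: if it lies to the left, conjugate all the $g_k$ by the reflection $r(x)=-x$, which sends orientation preserving $C^{1+\omega_k}$ diffeomorphisms to the same, preserves the constants $C_k$, sends $I$ and its orbit to intervals of the same lengths (so $S$ and $L$ are unchanged), and carries fixed points to fixed points; then $r\circ h\circ r$ pushes $r(I)$ to the right. So assume $I=[a,b]$ and $h(I)$ is to its right. The hypothesis that $h(I)$ lies in the $L$-neighbourhood of $I$ and misses $I$ becomes $b<h(a)\le h(b)\le b+L$, whence $|h(I)|<L$, $h(b)-b<L$, and — since $|h(I)|=\int_I Dh\ge|I|\inf_I Dh$ — also $\inf_I Dh<L/|I|$.

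The heart of the argument, and the step I expect to be the main obstacle, is to upgrade this into a derivative bound on a \emph{definite} neighbourhood of $I$ on the right (needed because a fixed point of $h$ cannot be produced from control on $I$ alone). I would set $\widehat I:=[a,b+\beta]$ with
\[
\beta:=\sup\bigl\{\beta'\ge 0:\ |h_j([a,b+\beta'])|\le 2|I_j|\ \text{for all }0\le j\le n\bigr\}.
\]
This supremum is finite (the $j=0$ constraint forces $\beta\le|I|$) and attained, so $|h_j(\widehat I)|\le 2|I_j|$ for all $j$; by subadditivity of the concave moduli ($\omega_{k_{j+1}}(2t)\le 2\omega_{k_{j+1}}(t)$), the displayed distortion estimate with $J=\widehat I$ gives $\sup_{\widehat I}Dh_m\le e^{2CS}\inf_{\widehat I}Dh_m$ for every $m\le n$. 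If $\beta=|I|$ then $\beta\ge|I|e^{-2CS}=2L$ trivially; otherwise maximality of $\beta$ produces some $j_0\in\{1,\dots,n\}$ with $|h_{j_0}(\widehat I)|=2|I_{j_0}|$, and comparing the average derivatives of $h_{j_0}$ over the adjacent pieces $I$ and $[b,b+\beta]$ of $\widehat I$ via that distortion bound yields $\beta/|I|\ge e^{-2CS}$, i.e. again $\beta\ge 2L$. Hence $\widehat I\supseteq[a,b+2L]$ and
\[
\sup_{\widehat I}Dh\le e^{2CS}\inf_{\widehat I}Dh\le e^{2CS}\inf_I Dh<e^{2CS}\cdot\frac{L}{|I|}=\frac12 .
\]

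Finally I would conclude with the intermediate value theorem. Lift $h$ to $\widetilde h$ on $\mathbb R$ and put $\delta:=\widetilde h-\mathrm{id}$, a continuous periodic function. Then $\delta(a)=h(a)-a>b-a>0$, $\delta(b)=h(b)-b<L$, and $\delta'=Dh-1<-\tfrac12$ on $[b,b+2L]\subseteq\widehat I$, so $\delta(b+2L)<\delta(b)-L<0$; therefore $\delta$ vanishes somewhere in $(a,b+2L)$, i.e. $h=g_{k_n}\circ\cdots\circ g_{k_1}$ has a fixed point. Note the precise value $L=|I|/(2e^{2CS})$ is exactly what makes both $\beta\ge 2L$ and the sign change of $\delta$ work out.
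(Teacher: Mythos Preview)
Your argument is correct, and its core is the same as the paper's: propagate the distortion estimate from $I$ to a fixed enlargement using the concavity inequality $\omega(2t)\le 2\omega(t)$, with the exact choice $L=|I|/(2e^{2CS})$ making the bootstrap close up. The execution differs in two places. First, the paper works symmetrically: it lets $J$ be the closed $2L$-neighbourhood of $I$, writes $I_1,I_2$ for the two side intervals of length $2L$, and proves by forward induction on $j$ the coupled statements $|h_j(I_i)|\le|h_j(I)|$ and $\sup_{I\cup I_i}Dh_j/\inf_{I\cup I_i}Dh_j\le e^{2CS}$; your supremum definition of $\beta$ and the comparison of mean derivatives on $I$ and $[b,b+\beta]$ is a continuity-based repackaging of that same induction. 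Second, and more substantively, the conclusions differ: from $|h_n(I)|<L$ and $|h_n(I_i)|\le|h_n(I)|$ the paper gets $h_n(J)\subset J$ and invokes Brouwer, whereas you extract the pointwise bound $Dh_n<\tfrac12$ on $\widehat I$ and apply the intermediate value theorem to $\tilde h-\mathrm{id}$. The paper's route is shorter (no left/right reduction, no derivative control), while yours yields a little more information (a contraction constant). One point to make explicit in your write-up: fix the lift $\tilde h$ so that $\tilde h(a)\in(b,b+L)$, since a zero of $\tilde h-\mathrm{id}$ gives a circle fixed point only for that particular lift.
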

\begin{proof}
Let $J$ be the closed $2L$-neighborhood of $I$ and $I_1, I_2$ the connected components of $J\setminus I_1$ to the complement of $I$. By induction on $j\in\{0,\cdots,n_0\}$, we will prove the following properties:
\begin{itemize}
  \item [($A_j$)] $|g_{k_j}\cdots g_{k_1}(I_1)|\leq |g_{k_j}\cdots g_{k_1}(I)|$;
  \item [($B_j$)] \begin{math} \sup_{x,y\in I\cup I_1} \frac{D(g_{k_j}\cdots g_{k_1})(x)}{D(g_{k_j}\cdots g_{k_1})(y)} \leq \exp(2CS) \end{math}.
\end{itemize}
The properties $(A_0)$ and $(B_0)$ are trivial. Suppose that $(A_i)$ and $(B_i)$ hold for every $i\in\{0,\cdots,j-1\}$. Then for any $x,y\in I\cup I_1$, we have

\begin{eqnarray*}
&&\left|\log \frac{D(g_{k_j}\cdots g_{k_1})(x)}{D(g_{k_j}\cdots g_{k_1})(y)}\right| \\
&\leq & \sum_{i=0}^{j-1}\left| \log(D(g_{k_j}\cdots g_{k_1})(x))-\log(D(g_{k_j}\cdots g_{k_1})(y))\right| \\
&\leq& \sum_{i=0}^{j-1} C_{k_{i+1}}\omega_{k_{i+1}} \left( |D(g_{k_j}\cdots g_{k_1})(x)-D(g_{k_j}\cdots g_{k_1})(y) | \right)\\
&\leq& C \sum_{i=0}^{j-1} \omega_{k_{i+1}} \left( |D(g_{k_j}\cdots g_{k_1})(I)|+|(D(g_{k_j}\cdots g_{k_1})(I_1) | \right).
\end{eqnarray*}

Since  $\omega(t)$  is increasing and $|(D(g_{k_j}\cdots g_{k_1})(I_1) |\leq |(D(g_{k_j}\cdots g_{k_1})(I)|$ by induction hypothesis, the righthand side of the above inequality is bounded by
\begin{eqnarray*}
&& C \sum_{i=0}^{j-1} \omega_{k_{i+1}}\left( 2 |D(g_{k_j}\cdots g_{k_1})(I)|\right)\\
&\leq& 2C\sum_{i=0}^{j-1} \omega_{k_{i+1}} \left( |D(g_{k_j}\cdots g_{k_1})(I)|\right)\\
&\leq& 2CS.
\end{eqnarray*}
The first inequality above is followed that $\omega(t)\geq \frac{1}{2}\omega(0)+\frac{1}{2}\omega(2t)\Longrightarrow \omega(2t)\leq 2\omega(t)$, since $\omega(t)$ is concave.
Thus $(B_j)$ follows.  By the Mean Value Theorem, there exist $x_j\in I_1$ and $y_j\in I$ such that
\begin{displaymath}
|D(g_{k_j}\cdots g_{k_1})(I_1)|=D(g_{k_j}\cdots g_{k_1})(x_j)| I_1|,
\end{displaymath}
and
\begin{displaymath}
 |D(g_{k_j}\cdots g_{k_1})(I)|=D(g_{k_j}\cdots g_{k_1})(y_j)| I|.
\end{displaymath}
 Then by $(B_j)$, we obtain $(A_j)$, since 
 \begin{displaymath}
 \frac{|D(g_{k_j}\cdots g_{k_1})(I_1)|}{|D(g_{k_j}\cdots g_{k_1})(I)|}= \frac{D(g_{k_j}\cdots g_{k_1})(x_j)|I_1|}{D(g_{k_j}\cdots g_{k_1})(y_j)|I|}\leq\exp(CS)\frac{|I_1|}{|I|}\leq 1.
 \end{displaymath}
Similarly, we have the analogous arguments for $I_2$.\\
Suppose that  $g_{k_n}\cdots g_{k_1}(I)$ is contained in the $L$-neighborhood of $I$ but does not intersect $I$. Then $(A_n)$ implies that $g_{k_n}\cdots g_{k_1}(J)\subset J$. Hence  $g_{k_n}\cdots g_{k_1}$ has a fixed point in $J$.
\end{proof}

\begin{lemma}\label{comb. lemma}
Let $l_{i,j}$ be positive real numbers with $i\in\{1,\cdots,m\}$  and $j\in\{1,\cdots,n\}$. Suppose that $\sum_{i=1}^m\sum_{j=1}^n l_{i,j}\leq 1$. Let $\omega:[0,1]\rightarrow \mathbb{R}_{+}$ be an increasing and  concave function. Then there exists $k\in\{1,\cdots,n\}$ such that
\begin{displaymath}
\sum_{i=1}^m \omega(l_{i,j})\leq  m\omega\left(\frac{1}{mn}\right).
\end{displaymath}
Moreover, for any $A>0$, there is a proportion of indices $k\in\{1,\cdots,n\}$ no less than $(1-1/A)$ such that
\begin{displaymath}
\sum_{i=1}^m \omega(l_{i,k})\leq   A m\omega\left(\frac{1}{mn}\right).
\end{displaymath}
\end{lemma}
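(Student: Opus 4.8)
The plan is to reduce both statements to the column sums of the array $(l_{i,j})$ together with Jensen's inequality for the concave function $\omega$. First I would introduce the column sums $s_j:=\sum_{i=1}^m l_{i,j}$ for $j\in\{1,\dots,n\}$; the hypothesis gives $\sum_{j=1}^n s_j\le 1$, and in particular each $s_j\le 1$, so $s_j/m\in[0,1]$ lies in the domain of $\omega$. Since $\omega$ is concave, Jensen's inequality yields, for every fixed $j$,
\[
\sum_{i=1}^m \omega(l_{i,j})\ \le\ m\,\omega\!\left(\frac1m\sum_{i=1}^m l_{i,j}\right)\ =\ m\,\omega\!\left(\frac{s_j}{m}\right).
\]
It therefore suffices to locate indices $j$ for which $\omega(s_j/m)$ is suitably small, and everything else is elementary estimation of $\omega$ using monotonicity and concavity.

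For the first assertion, the pigeonhole principle provides an index $k$ with $s_k\le 1/n$. Since $\omega$ is nondecreasing, $\omega(s_k/m)\le\omega\!\left(1/(mn)\right)$, and combining this with the display above gives $\sum_{i=1}^m\omega(l_{i,k})\le m\,\omega\!\left(1/(mn)\right)$, as desired.

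For the ``moreover'' part I would use a Markov-type counting estimate. We may assume $A\ge 1$, since otherwise $1-1/A\le 0$ and there is nothing to prove. Because $\sum_{j}s_j\le 1$, the number of indices $j$ with $s_j>A/n$ is strictly less than $n/A$, so more than $n(1-1/A)$ of the indices satisfy $s_j\le A/n$. For each such index it remains to check that $\omega(s_j/m)\le A\,\omega\!\left(1/(mn)\right)$. This is where concavity enters a second time: since $\omega(0)\ge 0$, concavity forces $t\mapsto\omega(t)/t$ to be nonincreasing on $(0,1]$ --- equivalently $\omega(ct)\le c\,\omega(t)$ for $c\ge 1$, the same elementary consequence of concavity already used in the proof of Lemma~\ref{fixed pt}. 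Hence, if $s_j\ge 1/n$ then $\omega(s_j/m)\le (ns_j)\,\omega\!\left(1/(mn)\right)\le A\,\omega\!\left(1/(mn)\right)$, while if $s_j<1/n$ then simply $\omega(s_j/m)\le\omega\!\left(1/(mn)\right)\le A\,\omega\!\left(1/(mn)\right)$. Multiplying through by $m$ and invoking the display completes the estimate for at least the prescribed proportion of indices.

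The argument is essentially routine; the only mildly delicate point is the passage from concavity to the inequality $\omega(ct)\le c\,\omega(t)$ for $c\ge 1$ and its use to absorb the factor $A$, together with the bookkeeping that ensures every argument of $\omega$ stays in $[0,1]$ --- which is automatic from $s_j\le 1$. I do not expect any genuine obstacle here.
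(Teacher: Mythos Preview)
Your argument is correct. Both you and the paper begin with the same column-wise Jensen bound $\sum_i\omega(l_{i,j})\le m\,\omega(s_j/m)$, but you then diverge. The paper applies Jensen a \emph{second} time, now across the index $j$, to obtain directly
\[
\frac1n\sum_{j=1}^n\Bigl(\sum_{i=1}^m\omega(l_{i,j})\Bigr)\ \le\ m\,\omega\!\left(\frac{1}{mn}\right),
\]
after which both conclusions follow immediately: the first by the fact that some term is at most the average, the second by Markov's inequality applied to the quantities $\sum_i\omega(l_{i,j})$ themselves. You instead work one level down, applying pigeonhole and Markov to the raw column sums $s_j$, and then have to invoke the extra concavity consequence $\omega(ct)\le c\,\omega(t)$ (for $c\ge1$) to transfer the factor $A$ through $\omega$. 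This is perfectly valid, but the paper's double-Jensen route is a touch cleaner: it never needs that auxiliary inequality, and the ``moreover'' part becomes a one-line Chebyshev estimate rather than a case split.
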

\begin{proof}
For any $k\in\{1,\cdots,n\}$ , by the concavity of $\omega$, we have
\begin{displaymath}
\sum_{i=1}^m \omega(l_{i,j})=m\sum_{i=1}^m \frac{1}{m}\omega(l_{i,j})\leq m\omega\left(\sum_{i=1}^m \frac{l_{i,j}}{m}\right).
\end{displaymath}
Thus
\begin{eqnarray*}
\frac{1}{n} \sum_{k=1}^{n}\left(\sum_{i=1}^m \omega(l_{i,j}) \right) &\leq& \frac{m}{n}\sum_{k=1}^n\omega\left(\sum_{i=1}^m \frac{l_{i,j}}{m}\right)\\
&=&  m\sum_{k=1}^n\frac{1}{n}\omega\left(\sum_{i=1}^m \frac{l_{i,j}}{m}\right)\\
&\leq & m \omega\left(\sum_{k=1}^n\sum_{i=1}^m \frac{l_{i,j}}{mn}\right)\\
&\leq & m\omega\left(\frac{1}{mn}\right).
\end{eqnarray*}
Therefore,  there exists $k\in\{1,\cdots,n\}$ such that
\begin{displaymath}
\sum_{i=1}^m \omega(l_{i,j})\leq  m\omega\left(\frac{1}{mn}\right).
\end{displaymath}
In order to prove the second part, we define
\begin{displaymath}
E:=\left\{k\in\{1,\cdots,n\}:~ \sum_{i=1}^m \omega(l_{i,k})\geq   A m\omega\left(\frac{1}{mn}\right) \right\}.
\end{displaymath}
In other words, we need to show that
\begin{displaymath}
|E|\geq \left(1-\frac{1}{A}\right)n.
\end{displaymath}
Note that
\begin{eqnarray*}
m\omega\left(\frac{1}{mn}\right)&\geq& \frac{1}{n} \sum_{k=1}^{n}\left(\sum_{i=1}^m \omega(l_{i,j}) \right)\geq \frac{1}{n} \sum_{k\notin E}\left(\sum_{i=1}^m \omega(l_{i,j}) \right)\\
&\geq& \frac{n-|E|}{n}  A m\omega\left(\frac{1}{mn}\right).
\end{eqnarray*}
Thus $|E|\geq \left(1-\frac{1}{A}\right)n. $
\end{proof}

\section{Proof of Theorem\ref{Thm}}

 Suppose that the theorem does not hold. Then there is a wandering interval  with nonempty interior. Let $I$ be a maximal wandering interval. We will search for a sequence  $h_n:=f_{k_n}\cdots f_{k_1}$ satisfying the hypothesis of Lemma \ref{fixed pt}. Then we conclude that $h_n$ has a fixed point. This implies that the rotation number of $h_n$ is zero. Recall that for a commutative group $G$ of circle homeomorphisms, there is a $G$-invariant probability measure. Then the rotation number is a homomorphism from $G$ to $\mathbb{R}/\mathbb{Z}$. But this contradicts to the hypothesis that the rotation numbers of $f_1,\cdots,f_d$ are independent over the rationals.\\

\subsection{The case $d=2$}

Let $l_{i,j}=\left| f_1^{i} f_2^{j}(I) \right|$ . Since $I$ is a wandering interval, $ \sum_{i,j\geq 0}l_{i,j}\leq 1$. \\
\indent In the subsequent, we use $[a,b]$ to denote the integers between $a$ and $b$ including the end if it is an integer.\\
\indent Let $(i_n)_{n\geq 1}$ and $(j_{n})_{n\geq 1}$ be two sequences of non negative integers satisfying $i_0=i_1=j_0=j_1=0$. Consider a sequence of rectangles $R_{m}\subset \mathbb{N}\times \mathbb{N}$ with
$R_0=\{(0,0)\}$ and
\begin{displaymath}
R_{2m+1}=[i_m,i_{m+1}]\times [j_m,j_{m+2}],~R_{2m+2}=[i_m,i_{m+2}]\times [j_{m+1},j_{m+2}].
\end{displaymath}
Denote by $X_m$ and $Y_m$ the number of integers on the horizontal and vertical sides of $R_{m}$ respectively .\\
\indent Applying Lemma \ref{comb. lemma} to $ R_m$ gives us a sequence $(r(m))_{m\geq0}$ of integers such that
\begin{itemize}
  \item  $r(2m+1)\in [i_m,i_{m+1}]$ such that
  \begin{eqnarray*}
  \sum_{j=j_m}^{j_{m+2}} \omega_{2}(l_{r(2m+1),j}) &\leq& Y_{2m+1} \omega_{2}\left(\frac{1}{X_{2m+1}Y_{2m+1}}\right)\\&=&\frac{ \omega_{2}\left(\frac{1}{X_{2m+1}Y_{2m+1}}\right)}{ \omega_{1}\left(\frac{1}{Y_{2m+1}}\right) \omega_{2}\left(\frac{1}{Y_{2m+1}}\right)}  \omega\left(\frac{1}{Y_{2m+1}}\right);
  \end{eqnarray*}
  \item  $r(2m+2)\in [j_{m+1},j_{m+2}]$ such that
  \begin{eqnarray*}
  \sum_{i=i_m}^{i_{m+2}} \omega_{1}(l_{i,r(2m+2)}) &\leq& X_{2m+2} \omega_{1}\left(\frac{1}{X_{2m+2}Y_{2m+2}}\right)\\&=&\frac{ \omega_{1}\left(\frac{1}{X_{2m+2}Y_{2m+2}}\right)}{ \omega_{1}\left(\frac{1}{X_{2m+2}}\right) \omega_{2}\left(\frac{1}{X_{2m+2}}\right)}  \omega\left(\frac{1}{X_{2m+2}}\right).
  \end{eqnarray*}
\end{itemize}
Starting from the origin and following the corresponding horizontal line $y=r(2m+2)$ and vertical line $x=r(2m+1)$, we obtain a path $(i(n),j(n))_{n\geq 0}$ with $i(0)=j(0)=0$ and
\begin{equation*}
(i(n+1),j(n+1))-(i(n),j(n))=(0,1) ~~\text{or}~~(1,0).
\end{equation*}
Moreover, the sum $\sum_{n\geq 0} \omega_{\xi(n)}\left(l_{i(n),j(n)}\right) $is bounded by
\begin{equation*}
\sum_{m\geq 0} \left(\frac{ \omega_{2}\left(\frac{1}{X_{2m+1}Y_{2m+1}}\right)}{ \omega_{1}\left(\frac{1}{Y_{2m+1}}\right) \omega_{2}\left(\frac{1}{Y_{2m+1}}\right)}  \omega\left(\frac{1}{Y_{2m+1}}\right)+ \frac{ \omega_{1}\left(\frac{1}{X_{2m+2}Y_{2m+2}}\right)}{ \omega_{1}\left(\frac{1}{X_{2m+2}}\right) \omega_{2}\left(\frac{1}{X_{2m+2}}\right)}  \omega\left(\frac{1}{X_{2m+2}}\right)\right),
\end{equation*}
where
\begin{equation*}
\xi(n)=\begin{cases}
1, & (i(n+1),j(n+1))=(i(n),j(n))+(1,0) \\
2,&(i(n+1),j(n+1))=(i(n),j(n))+(0,1).
\end{cases}
\end{equation*}
\indent Since $\omega_1$ and $\omega_2$ are consistent, we can choose two strictly increasing sequences of integers $(i_m)_{m\geq 0}$ and $(j_m)_{m\geq 0}$ such that both $X_m$ and $Y_m$ tend to infinity as $m$ goes to infinity and
\begin{equation}\label{eq1}
\omega_{1}\left(\frac{1}{X_{2m+2}Y_{2m+2}}\right)\ll  \omega_{1}\left(\frac{1}{X_{2m+2}}\right) \omega_{2}\left(\frac{1}{X_{2m+2}}\right),
\end{equation}
and
\begin{equation}\label{eq2}
\omega_{2}\left(\frac{1}{X_{2m+1}Y_{2m+1}}\right)\ll  \omega_{1}\left(\frac{1}{Y_{2m+1}}\right) \omega_{2}\left(\frac{1}{Y_{2m+1}}\right).
\end{equation}

Since  $\lim_{t\rightarrow 0^{+}}\omega(t)=0$, there exist a subsequence $m_1<m_2<\cdots$ of positive integers such that
\begin{equation*}
\omega\left(\frac{1}{X_{2m_j+2}}\right) <\frac{1}{j^2}~~\text{and}~~\omega\left(\frac{1}{Y_{2m_j+1}}\right) <\frac{1}{j^2}.
\end{equation*}
Thus we may assume that
\begin{equation*}
\sum_{m\geq 0} \left( \omega\left(\frac{1}{Y_{2m+1}}\right)+\omega\left(\frac{1}{X_{2m+2}}\right)\right)< +\infty.
\end{equation*}
Therefore, there exists a constant $C_1>0$ such that
\begin{equation*}
\sum_{n\geq 0} \omega_{\xi(n)}\left(l_{i(n),j(n)}\right) \leq C_1\sum_{m\geq 0} \left( \omega\left(\frac{1}{Y_{2m+1}}\right)+\omega\left(\frac{1}{X_{2m+2}}\right)\right)=:S< +\infty.
\end{equation*}
For $n\geq 1$, let $k_n=\xi(n-1)\in\{1,2\}$. Then we obtain a sequence $h_n:=f_{k_n}\cdots f_{k_1}$ such that
\begin{equation*}
\omega_{k_{n+1}}\left(f_{k_n}\cdots f_{k_1}(I) \right)=\omega_{\xi(n)}\left(l_{i(n),j(n)}\right).
\end{equation*}
Hence
For $n\geq 1$, let $k_n=\xi(n-1)\in\{1,2\}$. Then we obtain a sequence $h_n:=f_{k_n}\cdots f_{k_1}$ such that
\begin{equation*}
\sum_{n\geq0}\omega_{k_{n+1}}\left(f_{k_n}\cdots f_{k_1}(I) \right)\leq S.
\end{equation*}

In order to apply Lemma\ref{fixed pt}, it suffices to show that there exists some $n\geq 1$ such that $h_n(I)=f_{k_n}\cdots f_{k_1}(I)$ is contained in the $L$-neighborhood of $I$, since $I$ is a wandering interval.\\

Since $f_1$ and $f_2$ are semiconjugate to irrational rotations, if we collapse every connected component of the complement of the minimal invariant Cantor set, then we get a topological circle $\tilde{S^{1}}$ on which $f_1$ and $f_2$ induce minimal homeomorphisms $\tilde{f_1}$ and $\tilde{f_2}$ respectively. Now the $L$-neighborhood of $I$ becomes an interval $V$ with nonempty interior in $\tilde{S^{1}}$. Then there must exist $N\in\mathbb{N}$ such that both $\tilde{f_1}^{-1}(V),\cdots, \tilde{f_1}^{-N}(V)$ and $\tilde{f_2}^{-1}(V),\cdots, \tilde{f_2}^{-N}(V)$ cover the circle $\tilde{S^{1}}$.  Since both $X_m$ and $Y_m$ go to infinity as $m$ tends to infinity, there exists an integer $r\in\mathbb{N}$ such that $k_r=k_{r+1}=\cdots=k_{r+N}$. Converting to the original standard circle, there exist $k,k'\in\{1,\cdots,N\}$ such that both $f_1^{k}f_{k_r}\cdots f_{k_1}(I)$ and $f_2^{k'}f_{k_r}\cdots f_{k_1}(I)$ are contained in the $L$-neighborhood of $I$. This implies that at least one of the intervals $h_{r+1}(I),\cdots,h_{r+N}(I)$ is contained in the $L$-neighborhood of $I$. Thus we complete the proof.

\subsection{The general case}
Let $(l_{i_1,\cdots,i_d})_{i_1,\cdots,i_d\geq 1}$ be a multi-indexed sequence of positive real numbers with $\sum_{i_1,\cdots,i_d\geq 1}l_{i_1,\cdots,i_d}\leq 1$. Let $(x_{1,m})_{m\geq 1}$, $\cdots,(x_{d,m})_{m\geq 1}$ be sequences of strict increasing nonnegative integers and let $R_0=\{(0,\cdots,0)\}$. Consider a sequence $(R_m)_{m\geq 0}$ of rectangles of the form $R_{m}= [0.x_1,m]\times\cdots\times[0,x_{d,m}]$  satisfying that for each $k\in\{1,\cdots,d\}$,
\begin{equation*}
x_{k,m}
\begin{cases}
= x_{k,m-1},& m\not\equiv k\\
> x_{k,m-1},& m\equiv k.
\end{cases}
\end{equation*}
Now for each $m\geq 1$, let $r(m)$ be the unique integer in $\{1,\cdots,d\}$ such that $m\equiv r(m) \mod d$. Denote by $F_m$ the face
\begin{equation*}
[0,x_{1,m}]\times\cdots[0,x_{r(m)-1,m}]\times \{0\}\times[0,x_{r(m)+1,m}]\times\cdots\times[0,x_{d,m}]
\end{equation*}
of $R_m$. Let $A_m>1$ and denote by $E_m$ the set of $(i_1,\cdots,i_{r(m)-1},0,i_{r(m)+1}$, $\cdots,i_d)\in F_m$ such that
\begin{equation*}
 \sum_{j=0}^{x_{r(m),m}} \omega_{r(m)} \left( l_{i_1,\cdots,i_{r(m)-1},0,i_{r(m)+1},\cdots,i_d}\right)\leq A_m X_{r(m),m}  \omega_{r(m)} \left( \frac{1}{\prod_{j=1}^{d}X_{j,m}}\right),
\end{equation*}
where $X_{j,m}:=1+x_{j,m}$. Then by Lemma \ref{comb. lemma}, we have
\begin{equation*}
\frac{|E_m|}{|F_m|}\geq 1-\frac{1}{A_m}.
\end{equation*}
Similar to the proof of the case $d=2$, we need to choose a path of points $(x_1(n),\cdots,x_d(n))_{n\geq 0}$ starting at the origin which is long enough and satisfies
\begin{equation*}
\sum_{n=0}^{P} \omega_{\xi(n)}\left(l_{x_{1}(n),\cdots,x_d(n)}\right)<S,
\end{equation*}
where $P$ denote the length of the path and $S$ is independent of $P$. In order to show the existence of such path, we recall the following lemma which is showed in \cite{KN}.
\begin{lemma}\label{admissible lines}
Denote by $\mathcal{L}(m)$ the set of all lines inside the rectangle $R_m$ in the $r(m)$-direction for each $m\geq 1$. Let $\mathcal{L}'(m)$ be a subset of $\mathcal{L}(m)$ such that$|\mathcal{L}'(m)|\geq  \left(1-\frac{1}{A_m}\right)|\mathcal{L}(m)|$. If there exist $M\in\mathbb{N}$ such that $\sum_{m=1}^M \frac{1}{A_m}<1$, then there exists a sequence of lines $L_m\in \mathcal{L}'(m), m=1,\cdots,M$, such that $L_{m+1}$ intersects $L_m$ for every $0\leq m<M$, where $L_0$ denote the origin.
\end{lemma}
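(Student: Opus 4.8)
The plan is to reduce Lemma~\ref{admissible lines} to a purely combinatorial counting argument carried out backwards from level $M$; the lemma is quoted from \cite{KN}, but the following self-contained proof is what I would write. First I would record the elementary book-keeping. A line in $\mathcal{L}(m)$ is specified by fixing each coordinate $k\neq r(m)$ at some value in $\{0,1,\dots,x_{k,m}\}$, so $|\mathcal{L}(m)|=\prod_{k\neq r(m)}X_{k,m}$, and since at step $m$ only the $r(m)$-th coordinate grows, $X_{k,m}=X_{k,m-1}$ for $k\neq r(m)$, which is exactly what makes consecutive levels compatible. Write $i=r(m)$ and $j=r(m+1)$, so $i\neq j$ because $d\geq 2$. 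A short case analysis of when a line in direction $i$ meets a line in direction $j$ shows: given $L_m\in\mathcal{L}(m)$ with fixed coordinates $(a_k)_{k\neq i}$, the set $\mathcal{I}(L_m)$ of lines $L_{m+1}\in\mathcal{L}(m+1)$ meeting $L_m$ is precisely the set of those $L_{m+1}$ whose fixed coordinates $(b_k)_{k\neq j}$ satisfy $b_k=a_k$ for all $k\notin\{i,j\}$ and $b_i\in\{0,\dots,x_{i,m}\}$ arbitrary. Consequently $|\mathcal{I}(L_m)|=X_{i,m}$, the set $\mathcal{I}(L_m)$ depends only on the truncated tuple $(a_k)_{k\notin\{i,j\}}$, and as that tuple ranges over all admissible values these sets partition $\mathcal{L}(m+1)$ into $\prod_{k\notin\{i,j\}}X_{k,m}$ blocks, each of equal size $X_{i,m}$; moreover each block arises as $\mathcal{I}(L_m)$ for exactly $X_{j,m}$ distinct lines $L_m$ (corresponding to the free choice of the coordinate $a_j$).

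Next I would set up the backward induction. For $m\in\{1,\dots,M\}$ call $L_m\in\mathcal{L}(m)$ \emph{good} if there is a sequence $L_m,L_{m+1},\dots,L_M$ with $L_p\in\mathcal{L}'(p)$ for all $p\geq m$ and $L_{p+1}\in\mathcal{I}(L_p)$ for all $m\leq p<M$, and let $\mathcal{G}_m$ be the set of good lines; thus $\mathcal{G}_M=\mathcal{L}'(M)$ and $\mathcal{G}_m=\{L_m\in\mathcal{L}'(m):\mathcal{I}(L_m)\cap\mathcal{G}_{m+1}\neq\emptyset\}$. The statement to prove by downward induction on $m$ is
\[
\bigl|\mathcal{L}(m)\setminus\mathcal{G}_m\bigr|\;\leq\;\Bigl(\sum_{p=m}^{M}\frac{1}{A_p}\Bigr)\bigl|\mathcal{L}(m)\bigr|.
\]
The base case $m=M$ is just $|\mathcal{L}(M)\setminus\mathcal{L}'(M)|\leq\frac{1}{A_M}|\mathcal{L}(M)|$. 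For the inductive step, a line in $\mathcal{L}(m)\setminus\mathcal{G}_m$ either lies in $\mathcal{L}(m)\setminus\mathcal{L}'(m)$ (at most $\frac{1}{A_m}|\mathcal{L}(m)|$ of these) or is a line $L_m$ with $\mathcal{I}(L_m)\subseteq\mathcal{L}(m+1)\setminus\mathcal{G}_{m+1}$. Using the partition from the first paragraph: since the blocks are disjoint of size $X_{i,m}$, the number of blocks entirely contained in $\mathcal{L}(m+1)\setminus\mathcal{G}_{m+1}$ is at most $|\mathcal{L}(m+1)\setminus\mathcal{G}_{m+1}|/X_{i,m}$, which by the inductive hypothesis is at most $\bigl(\sum_{p=m+1}^{M}\frac{1}{A_p}\bigr)\prod_{k\notin\{i,j\}}X_{k,m}$; multiplying by $X_{j,m}$, the number of lines $L_m$ producing each block, bounds the number of such $L_m$ by $\bigl(\sum_{p=m+1}^{M}\frac{1}{A_p}\bigr)\prod_{k\neq i}X_{k,m}=\bigl(\sum_{p=m+1}^{M}\frac{1}{A_p}\bigr)|\mathcal{L}(m)|$. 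Adding the two contributions gives the claimed bound at level $m$.

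Finally, since $\sum_{p=1}^{M}\frac{1}{A_p}<1$, the claim at $m=1$ forces $|\mathcal{L}(1)\setminus\mathcal{G}_1|<1$, hence $\mathcal{G}_1=\mathcal{L}(1)$. Because all coordinates of $R_1$ except the first are $\{0\}$, the set $\mathcal{L}(1)$ consists of a single line, namely the segment of $R_1$ through the origin $L_0$; taking $L_1$ to be this line we have $L_1\in\mathcal{G}_1$ and $L_0$ lies on $L_1$, and unwinding the definition of $\mathcal{G}_1$ yields $L_2,\dots,L_M$ with $L_p\in\mathcal{L}'(p)$ and $L_{p+1}$ meeting $L_p$ for all $0\leq p<M$, as required. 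I expect the only genuine obstacle to be pinning down the intersection analysis and the resulting partition of $\mathcal{L}(m+1)$ exactly: one has to be careful about which coordinate is the ``free'' direction for $L_m$ versus $L_{m+1}$, why the blocks have uniform size $X_{r(m),m}$, and why each block is realized by precisely $X_{r(m+1),m}$ lines $L_m$. Once that combinatorial picture is secured, the backward induction and the concluding step are routine.
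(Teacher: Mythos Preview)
The paper does not actually prove Lemma~\ref{admissible lines}; it simply quotes the statement from \cite{KN}. Your argument, by contrast, is a complete self-contained proof, and it is correct. The combinatorial bookkeeping is accurate: because only the $r(m)$-th side grows at step $m$, one has $X_{k,m+1}=X_{k,m}$ for $k\neq r(m+1)$, so the sets $\mathcal{I}(L_m)$ really do partition $\mathcal{L}(m+1)$ into blocks of uniform size $X_{r(m),m}$, each realized by exactly $X_{r(m+1),m}$ lines $L_m$. Your backward induction on $|\mathcal{L}(m)\setminus\mathcal{G}_m|$ then goes through verbatim, and the terminal step uses correctly that in this particular construction $R_1$ is degenerate in all but the first coordinate, so $|\mathcal{L}(1)|=1$ and the unique line passes through the origin.

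This is, in spirit, the same density/counting argument one finds in \cite{KN}; your presentation via the explicit bound $|\mathcal{L}(m)\setminus\mathcal{G}_m|\leq\bigl(\sum_{p\geq m}1/A_p\bigr)|\mathcal{L}(m)|$ is clean and makes the role of the summability hypothesis transparent. The only place to be slightly careful is the final step: the conclusion $\mathcal{G}_1=\mathcal{L}(1)$ relies on $|\mathcal{L}(1)|=1$, which is a feature of the specific tower of rectangles in the paper (starting from $R_0=\{0\}$); in a more general setting you would only get $\mathcal{G}_1\neq\emptyset$ and would need an extra word to land on a line through the origin. Here that issue does not arise.
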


Since $\omega_1,\cdots,\omega_d$ satisfy the consistency condition, we can choose $x_{k,m}$'s such that for each $m\geq 0$,
\begin{equation*}
 \omega_{r(m)}\left( \frac{1}{\prod_{j=1}^{d}X_{j,m}}\right)\ll \prod_{j=1}^d \omega_{j} \left(\frac{1}{X_{j,m}} \right),
\end{equation*}
and since $\lim_{t\rightarrow 0^{+}}\omega(t)=0$, by passing to a subsequence, we may assume that
\begin{displaymath}
\sum_{m\geq 0}\left(\omega \left(\frac{1}{X_{r(m),m}}\right)\right)^{\frac{1}{2}}=:S_j<\infty.
\end{displaymath}
We choose $A_m=2S_{r(m)}/\left(\omega \left(\frac{1}{X_{r(m),m}}\right)\right)^{\frac{1}{2}}$. Then $\sum_{m\geq 0}\frac{1}{A_m}<1$. Now the previous Lemma \ref{admissible lines} provides us a desired sequence of lines $L_m,m\in \{0,\cdots,M\}$ for each $M\in \mathbb{N}$.
Then the sequence of lines $L_m$ induces a finite paths of points $(x_1(n),\cdots,x_d(n))_{n\geq 0}$ starting at the origin satisfying
\begin{equation*}
(x_1(n+1),\cdots,x_d(n+1))=(x_1(n),\cdots,x_d(n))+(0,\cdots,0,\pm 1,0,\cdots,0).
\end{equation*}
Moreover, if we denote by $\ell(M)$ the length of the path, then
\begin{eqnarray*}
\sum_{n=0}^{\ell(M)} \omega_{\xi(n)}\left(l_{x_{1}(n),\cdots,x_d(n)}\right)&\leq& \sum_{m=0}^M A_m X_{r(m),m}\omega_{r(m)}\left( \frac{1}{\prod_{j=1}^{d}X_{r(m),m}}\right)\\
&=& \sum_{m=0}^M A_m \frac{ \omega_{r(m)}\left( \frac{1}{\prod_{j=1}^{d}X_{j,m}}\right)}{ \prod_{j=1}^d \omega_{j} \left(\frac{1}{X_{r(m),m}} \right) } \omega \left(\frac{1}{X_{r(m),m}}\right)\\
&\leq&  A  \sum_{m=0}^M S_{r(m)}\left(\omega \left(\frac{1}{X_{r(m),m}}\right)\right)^{\frac{1}{2}}\\
&\leq&  A  \sum_{m\geq0} S_{r(m)}\left(\omega \left(\frac{1}{X_{r(m),m}}\right)\right)^{\frac{1}{2}}=:S<+\infty,
\end{eqnarray*}
where $\xi(n)$ is the unique index in $\{1,\cdots,d\}$ such that $|x_{\xi(n)}(n+1)-x_{\xi(n)}(n)|=1$, and $A$ is a constant independent of $M$.\\

Now we can prove the general case in a similar way as in the case $d=2$. Let $I$ be a maximal open wandering interval, and define
\begin{equation*}
l_{i_1,\cdots,i_d}= |f_1^{i_1}\cdots f_d^{i_d}(I)|.
\end{equation*}
Let $C:=\max\{C_1,\cdots,C_d,C_1',\cdots,C_d'\}$ with $C_k$ (resp. $C_k'$) being the $\omega_k$-constant of $\log(D f_k)$ (resp. $\log(D f_k^{-1})$) and take $C'$ such that
\begin{equation*}
\omega_k(2t)\leq C' \omega_k(t),~\forall t\geq 0~~\text{and}~~\forall k\in \{1,\cdots,d\}.
\end{equation*}
Now set $L:=\frac{1}{2\exp( 2CS)}$. Let $V$ be the $L$-neighborhood of $I$. Let $\Gamma$ be the semigroup generated by $f_1,\cdots,f_d, f_1^{-1},\cdots,f_d^{-1}$.\\
By the same reason as in the proof of the case $d=2$, there exists $N\in \mathbb{N}$ such that  for any $g\in \Gamma$, there exist $r_1,\cdots,r_d\in\{1,\cdots,N\}$ such that $f_1^{r_1}(g(I)),\cdots , f_d^{r_d}(g(I))$ are all contained in  the $L$-neighborhood of $I$.\\

Now choose $M\in\mathbb{N}$ large enough such that the number of points with integer coordinates in the $L_M$ in the $r(M)$-direction which are all contained in $R_M\setminus R_{M-1}$ exceeds N. Then we can complete the proof in the very way as in the case $d=2$.

\section{Proof of Corollary \ref{consistency}}

\begin{proof}
Let $C>0$ be such that $\omega(t_1t_2)\leq C \omega(t_1)\omega(t_2), \forall t_1,t_2\in [0,1]$. For simplicity, we firstly show the case of $d=2$.\\
By the continuity, it suffices to show that for any neighborhood $V$ of $(0,0)$, there exist $(x,y)\in V$ such that
\begin{equation}\label{consistency eq1}
\omega_1(xy)\leq C\omega_1(x)\omega_2(x),
\end{equation}
and
\begin{equation}\label{consistency eq2}
\omega_2(xy)\leq C\omega_1(y)\omega_2(y).
\end{equation}
 Define
\begin{equation*}
\phi(x):=\sup\{ y\in[0,1]: \omega_1(xy)\leq \omega_1(x)\omega_2(x)\}.
\end{equation*}
Now let $y=\phi(x)$. Then $\omega_{1}(xy)=\omega_1(x)\omega_2(y)$. Thus (\ref{consistency eq2}) is equivalent to
\begin{equation}\label{consistency eq3}
\omega_1(xy)\omega_2(xy)\leq C\omega_1(x)\omega_2(x)\omega_1(y)\omega_2(y).
\end{equation}
Since $\omega_1(t)\cdots \omega_d(t)=t\omega(t)$, (\ref{consistency eq3}) is equivalent to  (\ref{consistency eq0}). It is easy to see that $\lim_{x\rightarrow 0}\phi(x)=0$. Hence $(x,\phi(x))$ is such that both (\ref{consistency eq1}) and (\ref{consistency eq2}) hold.\\

For the general case, it suffices to show that for any  any neighborhood $U$ of the origin, there exist $(x_1,\cdots,x_d)\in U$ such that
\begin{equation}\label{eq3}
\omega_k(x_1\cdots x_d)\leq C^{d-1} \omega_1(x_k)\cdots \omega_d(x_k),~\forall k\in\{1,\cdots,d\}.
\end{equation}

Since $\omega_1,\cdots,\omega_d$ are comparable and $\lim_{t\rightarrow 0^+}\omega(t)=0$, we may assume that there exists a constant $\delta\in(0,1)$ such that for any $ x\in(0,\delta)$,
\begin{equation}\label{comparability}
\omega_d(x)\leq \omega_1(x)\leq\omega_{2}(x)\leq\cdots\leq \omega_{d-1}(x)~\text{and}~\omega(x)\leq 1.
\end{equation}

We will show that for any $x_1\in(0,\delta)$, there exist $x_2,\cdots,x_d\in(0,1)$ such that
\begin{equation}
\omega_k(x_1\cdots x_d)= \omega_1(x_k)\cdots \omega_d(x_k),~\forall k\in\{1,\cdots,d-1\}.
\end{equation}
Note that
\begin{equation*}
0=\omega_1(0)<\omega_1(x_1)\cdots\omega_d(x_1)<\omega(x_1).
\end{equation*}
Thus there exists a unique $t_2\in(0,1)$ such that
\begin{equation}
\omega_1(x_1t_2)=\omega_1(x_1)\cdots\omega_d(x_1),
\end{equation}
since $\omega_1$ is strictly increasing and continuous. By Theorem \ref{Navas}, we may assume that $\omega_1(x)\geq  x^{\frac{1}{2}}, \forall x\in (0,\delta)$. Then we have
\begin{equation*}
(x_1t_1)^{\frac{1}{2}}\leq \omega_1(x_1t_2)=\omega_1(x_1)\cdots\omega_d(x_1)=x_1\omega(x_1)\leq x_1.
\end{equation*}
Hence $t_2\leq x_1$.  Then
\begin{eqnarray*}
\omega_1(t_2)\cdots\omega_d(t_2)&\leq& \omega_1(x_1)\cdots \omega_d(x_1)\\
&=&\omega_1(x_1t_2)\leq \omega_2(x_1t_2)\\
&<&\omega_1(1)\cdots\omega_d(1)=1.
\end{eqnarray*}
Thus, since  $\omega_2$ is strictly increasing and continuous, there exists a unique $x_2\in[t_2,1)$ such that
\begin{equation*}
\omega_2(x_1t_2)=\omega_1(x_2)\cdots\omega_d(x_2).
\end{equation*}
\indent In order to make the process be continued, we have to make subtler estimation. By Theorem \ref{Navas}, we may assume that $\omega_{k}(x)\geq x^{\tau_k}, k\in\{1,\cdots,d\}$, with $1>\tau_d\geq \tau_1\geq\tau_2\geq\cdots\geq\tau_{d-1}\geq 0$ and $\tau_1+\cdots+\tau_d\leq 1$. Provided that we have found $x_1,x_2,\cdots,x_j$ and $t_2,\cdots,t_j$, for $2\leq j\leq d-2$ such that $t_{k}=x_kt_{k-1}$ and
\begin{equation*}
\omega_k(x_1t_2)= \omega_1(x_k)\cdots \omega_d(x_k),~\forall k\in\{1,\cdots,j\}.
\end{equation*}
Next we need to show there exists a unique $t_{j+1}\in(t_j,1)$ such that
\begin{equation}\label{consistency eq4}
\omega_{j+1}(x_1t_2)= \omega_1(x_{j+1})\cdots \omega_d(x_{j+1}).
\end{equation}
It can be guaranteed by
\begin{equation*}
\omega_1\left(t_j\right)\cdots \omega_d\left(t_j\right)
=t_j\omega\left(t_j\right)\leq t_j\leq (x_1t_2)^{\tau_{j+1}}\leq\omega_{j+1}(x_1t_2).
\end{equation*}
So it suffices to show that
\begin{equation}\label{consistency eq5}
t_j\leq (x_1t_2)^{\tau_{j+1}}.
\end{equation}
Since $x_j=\frac{t_{j-1}}{t_j}$, (\ref{consistency eq5}) is equivalent to show that $x_j\geq \frac{t_{j-1}}{(x_1t_2)^{\tau_{j+1}}}$ which can be implied by
\begin{eqnarray*}
\omega_1\left(\frac{t_{j-1}}{(x_1t_2)^{\tau_{j+1}}}\right)\cdots \omega_d\left(\frac{t_{j-1}}{(x_1t_2)^{\tau_{j+1}}}\right)
&=&\frac{t_{j-1}}{(x_1t_2)^{\tau_{j+1}}}\omega\left(\frac{t_{j-1}}{(x_1t_2)^{\tau_{j+1}}}\right)  \\
&\leq&\frac{t_{j-1}}{(x_1t_2)^{\tau_{j+1}}}\leq (x_1t_2)^{\tau_j}\\
&\leq&\omega_{j}(x_1t_2).
\end{eqnarray*}
So it suffices to show that
\begin{equation*}
t_{j-1}\leq (x_1t_2)^{\tau_j+\tau_{j+1}}.
\end{equation*}
Therefore, the existence of $x_{j+1}$ and $t_{j+1}$ can be guaranteed by
\begin{equation*}
t_2\leq (x_1t_2)^{\tau_2+\cdots\tau_{j+1}}.
\end{equation*}
That is
\begin{equation}\label{consistency eq6}
t_2\leq x_1^{\frac{\tau_2+\cdots\tau_{j+1}}{1-(\tau_2+\cdots+\tau_{j+1}) }}.
\end{equation}
Note that
\begin{equation*}
(x_1t_2)^{\tau_1}\leq\omega(x_1\tau_2)=\omega_1\left(x_1\right)\cdots \omega_d\left(x_1\right)
=x_1\omega\left(x_1\right)\leq x_1.
\end{equation*}
Thus
\begin{equation}
t_2\leq x_1^{\frac{1-\tau_1}{\tau_1}}.
\end{equation}
Since $\tau_2+\cdots\tau_{j+1}\leq\tau_2+\cdots+\tau_{d-1}\leq 1-\tau_1-\tau_d\leq 1-2\tau_1$, we have
\begin{equation}\label{consistency eq7}
\frac{1-\tau_1}{\tau_1}> \frac{\frac{1}{2}-\tau_1}{\tau_1}\geq\frac{\tau_2+\cdots\tau_{j+1}}{1-(\tau_2+\cdots+\tau_{j+1}) }.
\end{equation}
Then combining (\ref{consistency eq6}) and (\ref{consistency eq7}), we obtain (\ref{consistency eq5}). Thus we have showed (\ref{eq3}). Since
\begin{equation*}
\omega(x_1\cdots x_d)\leq C\omega(x_1)\omega(x_2\cdots x_d)\leq\cdots\leq C^{d-1}\omega(x_1)\cdots\omega(x_d),
\end{equation*}
we have
\begin{eqnarray*}
\omega_d(x_1\cdots x_d)&=&\frac{x_1\cdots x_d\omega(x_1\cdots x_d)}{\omega_1(x_1\cdots x_d)\cdots \omega_{d-1}(x_1\cdots x_d)}\\
&=&\frac{x_1\cdots x_d\omega(x_1\cdots x_d)}{\prod_{j=1}^{d-1}\prod_{i=1}^{d}\omega_{i}(x_j)}\\
&=&\frac{x_1\cdots x_d\omega(x_1\cdots x_d)\prod_{i=1}^{d}\omega_i(x_d)}{\prod_{j=1}^{d}\prod_{i=1}^{d}\omega_{i}(x_j)}\\
&\leq&C^{d-1}\frac{x_1\cdots x_d\omega(x_1)\cdots \omega(x_d)\prod_{i=1}^{d}\omega_i(x_d)}{\prod_{j=1}^{d}\prod_{i=1}^{d}\omega_{i}(x_j)}\\
&=&C^{d-1}\omega_1(x_d)\cdots\omega_d(x_d).
\end{eqnarray*}
Note that for $j\in\{1,\cdots,d-1\}$,
\begin{equation*}
\lim_{x_1\rightarrow 0^+}\omega_j(x_1\cdots x_d)=\omega_1(x_j)\cdots\omega_d(x_j)=0.
\end{equation*}
We have $\lim_{x_1\rightarrow 0^+} x_j=0$. By (\ref{comparability}) and $\omega_1(x)\cdots\omega_d(x)=x\omega(x)$, we have $\omega_d(x)\leq x^{\frac{1}{d}}$ for $x$ small enough. Thus if
\begin{eqnarray}
 &&(x_1\cdots x_d)^{\frac{1}{d}}\leq C^{d-1}\omega_1(x_d)\cdots\omega_d(x_d)=C^{d-1} x_d\omega(x_d)\nonumber\\
 &\Leftrightarrow& (x_1\cdots x_{d-1})^{\frac{1}{d}}\leq C^{d-1} x_d^{1-\frac{1}{d}}\omega(x_d)\label{eq4},
\end{eqnarray}
then we still have
\begin{equation*}
\omega_d(x_1\cdots x_d)\leq (x_1\cdots x_d)^{\frac{1}{d}}\leq C^{d-1}\omega_1(x_d)\cdots\omega_d(x_d)=C^{d-1} x_d\omega(x_d).
\end{equation*}
Since $\lim_{x_1\rightarrow 0^+}(x_1\cdots x_{d-1})^{\frac{1}{d}}=0$, we can choose $x_d$ with $\lim_{x_1\rightarrow 0^+} x_d=0$ for which we have (\ref{eq4}) hold and
\begin{equation*}
\omega_j(x_1\cdots x_d)\leq \omega_1(x_j)\cdots\omega_d(x_j),~\forall j\in\{1,\cdots,d-1\}.
\end{equation*}
Hence $\omega_1,\cdots,\omega_d$ satisfy the consistency condition. Therefore, we complete the proof.

\end{proof}

\subsection*{Acknowledgements}
The work is supported by NSFC (No. 11771318, No. 11790274).

\end{document}